\setlist[description]{leftmargin=2.62cm, labelindent=1cm}
\newtheorem{theorem}{Theorem}
\newtheorem{definition}[theorem]{Definition}
\newtheorem{cor}[theorem]{Corollary}
\DeclareMathOperator{\Imagine}{Im}
\numberwithin{equation}{section}
\def\CC{{\rm\kern.24em\vrule
width.02em height1.4ex
depth-.05ex\kern-.26em C}}
\def\1bar{\overline{1}}
\def\2bar{\overline{2}}
\begin{document}
\title{\bf Rigidity theorem by the minimal point of the Bergman kernel \rm}
\author{Robert Xin Dong\quad and\quad John Treuer}
\date{}
\maketitle

\renewcommand{\thefootnote}{\fnsymbol{footnote}}
\footnotetext{\hspace*{-7mm} 
\begin{tabular}{@{}r@{}p{16.5cm}@{}}
& 2010 Mathematics Subject Classification. Primary 30C40; Secondary 30C35, 30C85, 30C20\\
& Key words and phrases.
Bergman Kernel, Minimal Domain, Suita Conjecture, Szeg\"o Kernel

\end{tabular}}

\begin{abstract}
We use the Suita conjecture (now a theorem) to prove that for any domain $\Omega \subset \mathbb{C}$ its Bergman kernel $K(\cdot, \cdot)$ satisfies $K(z_0, z_0) = \hbox{Volume}(\Omega)^{-1}$ for some $z_0 \in \Omega$ if and only if $\Omega$ is either a disk minus a (possibly empty) closed polar set or $\mathbb{C}$ minus a (possibly empty) closed polar set. When $\Omega$ is bounded with $C^{\infty}$-boundary, we provide a simple proof of this using the zero set of the Szeg\"o kernel.  Finally, we show that this theorem fails to hold in $\mathbb{C}^n$ for $n > 1$ by constructing a bounded complete Reinhardt domain (with algebraic boundary) which is strongly convex and not biholomorphic to the unit ball $\mathbb{B}^n \subset \mathbb{C}^n$. 
 
\end{abstract}

\section{Introduction}
\indent
Let $\Omega$ be a domain in $\mathbb{C}^n$ and denote the Bergman space of $\Omega$ by $A^2(\Omega) = L^2(\Omega) \cap \mathcal{O}(\Omega)$ where $\mathcal{O}(\Omega)$ is the set of holomorphic functions on $\Omega$.  The Bergman space is a separable Hilbert space under the $L^2$-inner product with Lebesgue volume measure.  If $\{\phi_j\}_{j=0}^{\infty}$ is a complete orthonormal basis of $A^2(\Omega)$, then the Bergman kernel function $K: \Omega \times \Omega \to \mathbb{C}$ defined by
\begin{equation} \label{onb expansion}
K(z, w) = \sum_{j=0}^{\infty} \phi_j(z)\overline{\phi_j(w)}
\end{equation}
is the unique function on $\Omega \times \Omega$ which satisfies the properties
\begin{enumerate}
\item For all $w \in \Omega$, $K(\cdot, w) \in A^2(\Omega)$
\item $K(z, w) = \overline{K(w, z)}$
\item For all $f \in A^2(\Omega)$ and $z \in \Omega$,
\begin{equation} \label{reproducing}
f(z) = \int_{\Omega} f(w)K(z, w) {dv (w)}.
\end{equation} 
\end{enumerate}
We note that the definition of $K(z, w)$ is independent of the particular orthonormal basis chosen.  We shall use the notation $K_{\Omega}$ for the Bergman kernel of $\Omega$ instead of $K$ when we wish to emphasize that $\Omega$ is the domain under consideration.  If $f: \Omega_1 \to \Omega_2$ is a biholomorphic map, then the Bergman kernels for the respective domains are related by the transformation law of the Bergman kernel:
$$
K_{\Omega_1}(z, w) = f'(z)K_{\Omega_2}(f(z), f(w))\overline{f'(w)}.
$$  For further background on the Bergman kernel, we refer the readers to Krantz's book \cite{K01}.
\medskip

If $\Omega$ is a bounded domain in $\mathbb{C}^n$ and $v$ is the Lebesgue $\mathbb{R}^{2n}$-measure, then $ v(\Omega)^{-\sfrac{1}{2}} \in A^2(\Omega)$ and $\|v(\Omega)^{-\sfrac{1}{2}}\|_{L^2} = 1$.  Hence, if $\{v(\Omega)^{-\sfrac{1}{2}}\} \cup \{\phi_j\}_{j=1}^{\infty}$ is a complete orthonormal basis for $A^2(\Omega)$, then by \eqref{onb expansion},

\begin{equation} \label{volume inequality}
K(z, z) \geq {1 \over v(\Omega)}, \quad z \in \Omega.
\end{equation}

\noindent{}Equality in \eqref{volume inequality} is achieved for the unit ball $\mathbb{B}^n \subset \mathbb{C}^n$, $n \geq 1$ with $z = 0$ because $K_{\mathbb{B}^n}(z, w) = v(\mathbb{B}^n)^{-1}(1 - z\overline{w})^{-n - 1}$.  
\medskip

In this paper, we completely classify the domains in $\mathbb{C}$ for which equality in \eqref{volume inequality} holds at some (minimal) point in the domain.

\begin{theorem} \label{theorem 1}
Let $\Omega \subset \mathbb{C}$ be a domain.  Suppose there exists a $z_0 \in \Omega$ such that
\begin{equation} \label{=}
K(z_0, z_0) = {1 \over v(\Omega)},
\end{equation}
where we use the convention $v(\Omega)^{-1} = 0$ if $v(\Omega) = \infty$. 

\begin{enumerate}[label=(\roman*)]

\item \label{Case 1} If $v(\Omega) = \infty$, then $\Omega = \mathbb{C} \setminus P$ where $P$ is a possibly empty, closed polar set.

\item \label{Case 2} If $v(\Omega) < \infty$, then $\Omega = D(z_0, r) \setminus P$ where $P$ is a possibly empty, polar set closed in the relative topology of $D(z_0, r)$ with $r = \sqrt{v(\Omega)\pi^{-1}}$.

\end{enumerate}
\end{theorem}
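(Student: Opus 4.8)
The plan is to deduce the theorem from three ingredients: an elementary Hilbert--space manipulation that turns the pointwise identity \eqref{=} into the statement that $\Omega$ is a minimal domain with center $z_0$; the Suita conjecture together with its equality case (Guan--Zhou); and a ``volume versus conformal radius'' inequality that I would prove by hand.

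I would begin with the reformulation. When $v(\Omega)<\infty$ the constant function $1$ lies in $A^2(\Omega)$ with $\|1\|^2=v(\Omega)$, and a direct computation using the reproducing property \eqref{reproducing} gives $\|1-v(\Omega)K(\cdot,z_0)\|^2=v(\Omega)^2K(z_0,z_0)-v(\Omega)$. Thus \eqref{=} is equivalent to $K(\cdot,z_0)\equiv v(\Omega)^{-1}$, i.e.\ $f(z_0)=v(\Omega)^{-1}\int_\Omega f\,dv$ for every $f\in A^2(\Omega)$. When $v(\Omega)=\infty$, \eqref{=} says $K(z_0,z_0)=0$; but if $A^2(\Omega)\ne\{0\}$, dividing any $0\ne f\in A^2(\Omega)$ by $(z-z_0)$ raised to its vanishing order at $z_0$ produces an element of $A^2(\Omega)$ not vanishing at $z_0$ (the quotient is still $L^2$ because $|z-z_0|$ is bounded below away from $z_0$), forcing $K(z_0,z_0)>0$. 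So \eqref{=} with $v(\Omega)=\infty$ gives $A^2(\Omega)=\{0\}$, and by the classical fact that $A^2(\Omega)=\{0\}$ for a planar domain exactly when $\mathbb{C}\setminus\Omega$ is polar, case \ref{Case 1} follows. Henceforth $v(\Omega)<\infty$; then $\mathbb{C}\setminus\Omega$ has positive area, hence is non-polar, so $\Omega$ is hyperbolic, the Green function $g:=g_\Omega(\cdot,z_0)$ exists, and the Suita coefficient $c_\beta(z_0):=\exp\bigl(\lim_{z\to z_0}(g(z)-\log|z-z_0|)\bigr)$ lies in $(0,\infty)$.

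Next I would establish the two competing inequalities. The Suita conjecture (B{\l}ocki; Guan--Zhou) gives $\pi K(z_0,z_0)\ge c_\beta(z_0)^2$. In the other direction I would prove $v(\Omega)\ge\pi/c_\beta(z_0)^2$ by a coarea argument: put $u=e^{g}\colon\Omega\to[0,1)$, $A(t)=|\{u<t\}|$, and $L(t)=\mathcal{H}^1(\{u=t\})$. For a.e.\ $t\in(0,1)$ the open set $\{u<t\}$ is relatively compact in $\Omega$ with boundary $\{u=t\}$, and since $g$ is harmonic with a single logarithmic pole at $z_0$ one has the flux identity $\int_{\{u=t\}}|\nabla u|\,d\mathcal{H}^1=t\int_{\{u=t\}}|\nabla g|\,d\mathcal{H}^1=2\pi t$. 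Feeding this together with the coarea formula $A'(t)=\int_{\{u=t\}}|\nabla u|^{-1}\,d\mathcal{H}^1$, the Cauchy--Schwarz inequality on the level curve, and the planar isoperimetric inequality $L(t)^2\ge4\pi A(t)$ yields $t\,A'(t)\ge2A(t)$, so $A(t)/t^2$ is nondecreasing. Because $u(z)=c_\beta(z_0)|z-z_0|\bigl(1+o(1)\bigr)$ near $z_0$ we get $A(t)/t^2\to\pi/c_\beta(z_0)^2$ as $t\to0^+$, whereas $A(t)/t^2\to v(\Omega)$ as $t\to1^-$; monotonicity gives the inequality.

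Finally I would combine everything. The two inequalities and \eqref{=} force $\pi K(z_0,z_0)=c_\beta(z_0)^2$ and $v(\Omega)=\pi/c_\beta(z_0)^2$. The equality case of the Suita conjecture (Guan--Zhou) then provides a biholomorphism $f\colon\Omega\to\mathbb{D}\setminus E$ with $f(z_0)=0$, where $E\subset\mathbb{D}$ is closed and polar. Since polar sets are removable for $A^2$, $K_{\mathbb{D}\setminus E}=K_{\mathbb{D}}$, so $K_{\mathbb{D}\setminus E}(f(z),0)=\pi^{-1}$; the transformation law then gives $K_\Omega(z,z_0)=\pi^{-1}f'(z)\overline{f'(z_0)}$. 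Comparing with $K_\Omega(\cdot,z_0)\equiv v(\Omega)^{-1}\ne0$ shows $f'\equiv c$ for a nonzero constant $c$, hence $f(z)=c(z-z_0)$, hence $\Omega=D\bigl(z_0,|c|^{-1}\bigr)\setminus P$ with $P:=z_0+c^{-1}E$ closed and polar in $D(z_0,|c|^{-1})$; comparing areas gives $|c|^{-1}=\sqrt{v(\Omega)\pi^{-1}}$, which is \ref{Case 2}. The step I expect to be most delicate is the volume--capacity inequality: the coarea scheme is short but needs care about regular values of $g$, relative compactness of the sublevel sets (including when $\Omega$ is unbounded), and the behaviour of $g$ near $z_0$; the other nontrivial inputs---removability of polar sets for $A^2$ and the precise Guan--Zhou equality statement---carry the remaining analytic weight.
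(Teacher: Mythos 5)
Your proposal is correct, and its logical skeleton for the main case is the same as the paper's: reduce \eqref{=} to $K(\cdot,z_0)\equiv v(\Omega)^{-1}$, trap $v(\Omega)^{-1}$ between the Suita inequality $\pi K(z_0,z_0)\ge e^{2\lambda(z_0)}$ and the bound $v(\Omega)^{-1}\le e^{2\lambda(z_0)}/\pi$ coming from the monotonicity of $t\mapsto v(\{e^{g(\cdot,z_0)}<t\})/t^2$, invoke the Guan--Zhou equality case to get a biholomorphism onto $D(0,1)$ minus a closed polar set, and conclude linearity from the transformation law. The differences are in how two inputs are obtained. For the monotonicity, the paper simply cites B{\l}ocki--Zwonek (Theorem 3 of \cite{BZ15}), whereas you re-derive it via coarea, Cauchy--Schwarz on level curves, the flux identity, and the isoperimetric inequality; this is the standard proof of that result and is sound, but, as you note yourself, it carries the burden of justifying regular values, the relative compactness of sublevel sets (irregular boundary points where $g$ does not tend to $0$), and the asymptotics near the pole --- all of which the citation sidesteps. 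For case (i), the paper argues: $K(z_0,z_0)=0$ plus the Suita inequality forces the Green's function not to exist, and Myrberg's theorem then makes the complement polar. You instead show $K(z_0,z_0)=0$ forces $A^2(\Omega)=\{0\}$ (the division trick is correct) and appeal to the ``classical fact'' that $A^2(\Omega)=\{0\}$ iff $\mathbb{C}\setminus\Omega$ is polar; be aware that the nontrivial direction of that fact (non-polar complement implies $A^2\neq\{0\}$) is precisely what the paper extracts from the Suita inequality together with Myrberg's theorem, so you should either supply a genuine classical reference for it or reproduce that derivation --- as written it is a black box of the same depth as the paper's argument. Your Hilbert-space identity $\|1-v(\Omega)K(\cdot,z_0)\|^2=v(\Omega)^2K(z_0,z_0)-v(\Omega)$ is a clean equivalent of the paper's orthonormal-basis computation, and the final linearity argument matches the paper's.
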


 \noindent{}We remark that
\begin{enumerate}

\item \label{Converse} A set $P$ is said to be polar if there is a subharmonic function $u \not \equiv -\infty$ on $\mathbb C$ such that $P  \subset \{z \in \mathbb C\,: \, u(z)=-\infty\}$. If $P$ is a closed polar subset of a domain $\Omega$, then $A^2(\Omega \setminus P) = A^2(\Omega)$ by \cite{S82}.   It follows that if $\Omega_1 = D(z_0, r) \setminus P_1$ and $\Omega_2 = \mathbb{C} \setminus P_2$ where $P_i$, $i = 1, 2$, are relatively closed polar sets, then $K_{\Omega_i}$ extends to $\Omega_i \times \Omega_i$ and
$$
K_{\Omega_1}(z_0, z_0) = {1 \over v(\Omega_1)}, \quad K_{\Omega_2}(0, 0) = 0.
$$

\item Compact subsets of polar sets are totally disconnected.  So the polar set $P$ will be empty if, for instance, the boundary of $\Omega$ is parametrized by non-trivial simple closed curves.

\item We can see that in Theorem \ref{theorem 1}, Statement (i) still holds in the Riemann surface setting, since on any non-planar open Riemann surface the Bergman kernel does not vanish. However, Statement (ii) cannot be generalized to an arbitrary open Riemann surface. 

For example, let $X_{\tau, u}:=X_\tau \backslash \{u\}$ be an open Riemann surface obtained by removing one single point $u$ from a compact complex torus $X_\tau:=\mathbb C/\left(\mathbb Z+\tau\mathbb Z\right)$, for $\tau\in \mathbb C$ and $\Imagine \tau>0$. By the removable singularity theorem, the Bergman kernel on $X_{\tau, u}$ is the 2-form $K_{\tau, u} =(\Imagine \tau)^{-1} dz\wedge d\bar z,$ where $z$ is the local coordinate induced from the complex plane $\mathbb C$ (see \cite{D14, D17}). Here $v(X_{\tau, u})$ is precisely $\Imagine \tau$, the area of the fundamental parallelogram. So \eqref{=} holds true for $X_{\tau, u}$, which is not biholomorphic to $D(z_0, r) \setminus P$.

\item A bounded domain $\Omega \subset \mathbb{C}^n$ is called a minimal domain with a center $z_0 \in \Omega$ if $
v(\Omega) \leq v(\Omega')
$ for any biholomorphism $\varphi: \Omega \to \Omega'$ such that $\det(\hbox{J}\varphi(z_0)) = 1$, where $\hbox{J}\varphi$ denotes the Jacobian matrix of $\varphi$.  It is known \cite{M56} that equivalently a domain $\Omega$ is minimal with the center at $z_0$ if and only if 
$$
K(z, z_0) \equiv  {1 \over v(\Omega)}, \quad z \in \Omega.
$$

For more information about minimal and representative domains, see \cite{M56, K63, IK, YZ}.  Theorem 1 classifies all minimal domains in $\mathbb{C}$.  More precisely, we have the following corollary.

\end{enumerate}

\begin{cor} \label{Corollary}
Let $\Omega \subset \mathbb{C}$ be a domain.  Suppose there exists a $z_0 \in \Omega$ such that 
$$
K(z, z_0)\equiv C, \quad \text{for any } z \in \Omega.
$$   
Then $C = v(\Omega)^{-1}$ and
\begin{enumerate} [label=(\roman*)]
\item If $C = 0$, then $\Omega = \mathbb{C}\setminus P$ where $P$ is a closed polar set.
\item If $C > 0$, then $\Omega = D(z_0, r)\setminus P$ where $P$ is a possibly empty, polar set closed in the relative topology of $D(z_0, r)$ with $r = \sqrt{v(\Omega)\pi^{-1}}$. 
\end{enumerate}

Consequently, all minimal domains with the center $z_0$ in $\mathbb{C}$ are disks centered at $z_0$ possibly minus closed polar sets.

\end{cor}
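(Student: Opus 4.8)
The plan is to deduce the Corollary directly from Theorem \ref{theorem 1} once the value of the constant $C$ is identified. Setting $z = z_0$ in the hypothesis $K(z, z_0) \equiv C$ gives $C = K(z_0, z_0)$, which is real and non-negative. Consequently, as soon as we show $C = v(\Omega)^{-1}$, the equality \eqref{=} holds at the point $z_0$, and Theorem \ref{theorem 1} applies word for word: statement (i) corresponds to the case $C = 0$ (equivalently $v(\Omega) = \infty$) and statement (ii) to the case $C > 0$ (equivalently $v(\Omega) < \infty$).

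To pin down $C = v(\Omega)^{-1}$ I would split into two cases. If $v(\Omega) < \infty$, then the constant function $1$ lies in $A^2(\Omega)$, and applying the reproducing property \eqref{reproducing} to $f \equiv 1$ at the point $z_0$ yields
$$
1 = \int_{\Omega} K(z_0, w)\, dv(w) = \int_{\Omega} \overline{K(w, z_0)}\, dv(w) = \overline{C}\, v(\Omega) = C\, v(\Omega),
$$
so $C = v(\Omega)^{-1}$. If instead $v(\Omega) = \infty$, recall that $K(\cdot, z_0) \in A^2(\Omega)$ by the defining property of the Bergman kernel; since $K(\cdot, z_0) \equiv C$ is constant, the integrability condition $\int_{\Omega} |C|^2\, dv < \infty$ forces $C = 0$, which is precisely $v(\Omega)^{-1}$ under the stated convention.

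For the final assertion about minimal domains, I would argue as follows. A minimal domain is bounded by definition, hence of finite volume, and by \cite{M56} it satisfies $K(z, z_0) \equiv v(\Omega)^{-1} > 0$; part (ii) of the Corollary then shows that every minimal domain with center $z_0$ in $\mathbb{C}$ has the form $D(z_0, r) \setminus P$ with $P$ a polar set closed in $D(z_0, r)$ and $r = \sqrt{v(\Omega)/\pi}$. Conversely, for such a domain one has $A^2(D(z_0, r) \setminus P) = A^2(D(z_0, r))$ by \cite{S82}, and a direct computation of the disk kernel gives $K_{D(z_0, r)}(z, z_0) \equiv (\pi r^2)^{-1} = v(D(z_0, r) \setminus P)^{-1}$, so these domains are exactly the minimal ones with center $z_0$.

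I do not expect a genuine obstacle here: essentially all the content sits in Theorem \ref{theorem 1}, and the only new ingredient is the short identification $C = v(\Omega)^{-1}$. The mildly delicate point is the infinite-volume case, which rests on the elementary observation that a nonzero constant cannot be square-integrable over a set of infinite Lebesgue measure; everything else is a direct citation of Theorem \ref{theorem 1}, \cite{M56}, and \cite{S82}.
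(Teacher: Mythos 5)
Your proof is correct and follows essentially the same route as the paper: identify $C = v(\Omega)^{-1}$ via the reproducing property applied to the constant function, then invoke Theorem \ref{theorem 1}. You are in fact slightly more careful than the paper's one-line argument, which writes $1 = \int_{\Omega} K(w, z_0)\, dv(w)$ without separating out the infinite-volume case where $1 \notin A^2(\Omega)$; there one must instead argue, as you do, that the constant function $K(\cdot, z_0) \equiv C$ lies in $A^2(\Omega)$ only if $C = 0$.
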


In $\mathbb{C}^n$, $n \geq 1$, Equality \eqref{=} is achieved for the unit ball and more generally for complete Reinhardt domains.  A domain $\Omega$ in $\mathbb{C}^n$ is said to be complete Reinhardt if for all $z = (z_1,..., z_n) \in \Omega$
$$
 (\lambda_1z_1,..., \lambda_nz_n) \in \Omega, \quad |\lambda_i| \leq 1.
$$
For a bounded complete Reinhardt domain, $\{z^{\alpha}\}_{\alpha \in \mathbb{N}^n}$ is a complete orthogonal system of $A^2(\Omega)$.  It follows from \eqref{onb expansion} that $K_{\Omega}(0, 0) = v(\Omega)^{-1}$.  Equality \eqref{=} also holds for complete circular domains (cf. \cite{Bo00}).
\medskip

When $n > 1$, the unit polydisk $D(0, 1)^n$ is an example of a complete Reinhardt domain which is not biholomorphic to $\mathbb{B}^n$.  Thus, to generalize Theorem \ref{theorem 1} to $\mathbb{C}^n$, $n \geq 1$, $D(z_0, r)$ cannot simply be replaced by a translation and rescaling of $\mathbb{B}^n$.  However, the polydisk does not have smooth boundary whereas the unit ball is strongly convex with algebraic boundary.  So, we also consider whether Theorem \ref{theorem 1} generalizes to $\mathbb{C}^n$ if $\Omega$ is required to be complete Reinhardt, strongly convex with algebraic boundary.  The answer is no as the next theorem shows.

\begin{theorem} \label{theorem 2}
Let $\Omega = \{z \in \mathbb{C}^2: |z_1|^4 + |z_1|^2 + |z_2|^2 < 1\}$ be a domain with algebraic boundary. Then $\Omega$ is complete Reinhardt, strongly convex and not biholomorphic to $\mathbb{B}^2$.  
\end{theorem}

The paper is organized as follows.  In Section 2, we give a proof of Theorem \ref{theorem 1} using only complex analysis of one variable for the case where $\Omega$ has $C^{\infty}$-boundary.  In Section 3, Theorem \ref{theorem 1} is proved in full generality using the Suita conjecture and Corollary \ref{Corollary} is proved.  In Section 4, Theorem \ref{theorem 2} is proved.

\section{Proof of $C^{\infty}$ boundary case of Theorem \ref{theorem 1}}

The main ingredient in the proof of Theorem \ref{theorem 1} will be \eqref{Bergman-Szego Relation}.  The theory that follows can be found in Bell's book \cite{B92}. We begin by recalling the Szeg\"o kernel.  
\medskip

Let $\Omega$ be a bounded domain with $C^{\infty}$-smooth boundary and denote its boundary by $b\Omega$.  Then $\Omega$ is $n$-connected with $n < \infty$ and the boundary consists of $n$ simple closed curves parametrized by $C^{\infty}$ functions $z_j: [0, 1] \to \mathbb{C}$.  Without loss of generality, let $z_n$ parametrize the outermost boundary curve; that is, $z_n$ parametrizes the boundary component which bounds the unbounded component of the complement of the domain. Additionally, the boundary component parametrized by $z_j$, $j = 1, ..., n$, is denoted by $b\Omega_j$.
\medskip
Let $T(z)$ denote the unit tangent vector to the boundary
and $ds$ denote the arc-length measure of the boundary.  Define $L^2(b\Omega) = \{f: b\Omega \to \mathbb{C}: \|f\|_{L^2(b\Omega)}< \infty \}$ where the norm $\| \cdot \|_{L^2(b\Omega)}$ is induced by the inner product 
$$
\langle f, g \rangle = \int_{b\Omega} f\bar{g}\ ds.
$$
Let $A^{\infty}(b\Omega)$ denote the boundary values of functions in $\mathcal{O}(\Omega) \cap C^{\infty}(\overline{\Omega})$.  The Hardy space of $b\Omega$ denoted $H^2(b\Omega)$ is the $L^2(b\Omega)$ closure of $A^{\infty}(b\Omega)$.  If $P: L^2(b\Omega) \to H^2(b\Omega)$ is the orthogonal projection, then the Szeg\"o kernel for $\Omega$, $S(z, a)$, is defined by
$$
P(C_a(\cdot) )(z) = S(z, a), \quad a, z \in \Omega, \quad C_a(z) = \overline{{1 \over 2\pi i}{T(z) \over z - a}}
$$
\cite[Section 7]{B92}.  It can be shown that $S(z, a) = \overline{S(a, z)}$, and from the proof of the Ahlfor's Mapping Theorem, for each $a$, $S(\cdot, a)$ has $n - 1$ zeros counting multiplicity \cite[Theorem 13.1]{B92}.  We note that the proof of the Ahlfor's Mapping Theorem just cited requires $C^{\infty}$-boundary regularity.  Since we will need the fact about the $n-1$ zeros of $S(\cdot, a)$, we have imposed a $C^{\infty}$ boundary regularity assumption on $\Omega$ in this section.
\medskip

Let $\omega_j$ be the (unique) solution to the Dirichlet boundary-value problem
\[
\begin{cases}
 \Delta u(z) = 0 & z \in \Omega \\
 u(z) = 1 & z \in b\Omega_j \\
 u(z) = 0 & z \in b\Omega_k, \quad  k \neq j
\end{cases}
\]
and define $F_j: \Omega \to \mathbb{C}$ by $F_j(z) = 2\partial \omega_j / \partial z$.  Then the Bergman kernel and Szeg\"o kernel are related by
\begin{equation} \label{Bergman-Szego Relation}
K(z, a) = 4\pi S(z, a)^2 + \sum_{j=1}^{n-1} \lambda_j F_j(z)
\end{equation}
where $\lambda_j$ are constants in $z$ and depend on $a$ \cite[Theorem 23.2]{B92}.  Since $\omega_j \in C^{\infty}(\overline{\Omega})$ is harmonic, $F_j \in \mathcal{O}(\Omega) \cap C^{\infty}(\overline{\Omega}) \subset A^2(\Omega)$.  We now prove Theorem \ref{theorem 1} when $\Omega$ is bounded with $C^{\infty}$ boundary.
\begin{proof}
After a translation we may assume that $z_0 = 0$.  Let $\{v(\Omega)^{-\sfrac{1}{2}}\} \cup \{\phi_j\}_{j=1}^{\infty}$ be a complete orthonormal basis for $A^2(\Omega)$. Then
$$
{1 \over v(\Omega)} = K(0, 0) = {1 \over v(\Omega)} + \sum_{j=1}^{\infty} \phi_j(0)\overline{\phi_j(0)},
$$
which implies that $\phi_j(0) = 0$, for all $j$.   It follows that $K(0, a) = v(\Omega)^{-1},$ and for any $f \in A^2(\Omega)$ by the reproducing property \eqref{reproducing},
$$
f(0)  = {1 \over v(\Omega)} \int_{\Omega} f(w) dv(w).
$$
In particular for $F_j, j = 1,..., n-1$,
\begin{eqnarray*}
F_j(0) &=& {1 \over 2i v(\Omega)} \int_{\Omega} 2{\partial \omega_j \over \partial w} d\bar{w} \wedge dw
\\
&=& {-1 \over i v(\Omega)} \int_{b\Omega} \omega_j d\bar{w}
\\
&=&
{-1 \over i v(\Omega)} \int_{b\Omega_j} 1 d\bar{w}
\\
&=&
0.
\end{eqnarray*}
Hence setting $z = 0$ in \eqref{Bergman-Szego Relation},
$$
{1 \over v(\Omega)} = K(0, a) = 4\pi S^2(0, a)
$$
Since $S(0, \cdot) = \overline{S(\cdot, 0)}$ has $n - 1$ zeros counting multiplicity, $n =1$; that is, $\Omega$ is simply-connected.
\medskip
Let $F:D(0, 1) \to \Omega$ be the inverse of the Riemann map with $F(0) = 0, F'(0) > 0$.  By the transformation law of the Bergman kernel,
$$
{1 \over \pi} = K_{D(0, 1)}(z, 0) = F'(z)K_{\Omega}(F(z), 0)\overline{F'(0)} = {F'(z)\overline{F'(0)} \over v(\Omega)}.
$$
So $F$ is linear; hence $\Omega = D(0, F'(0))$.   

\end{proof}

\section{Proof of Theorem \ref{theorem 1}}
The proof of Theorem \ref{theorem 1} in the previous section used methods specific to bounded domains with $C^{\infty}$-boundary and cannot be used to prove the general case where $\Omega \subset \mathbb{C}$ is a domain. So instead, we will use the Suita conjecture to prove the general case. 
\medskip

Let $SH(\Omega)$ be the set of subharmonic functions on $\Omega$. The (negative) Green's function of a domain $\Omega \subset \mathbb{C}$ is defined by
$$
g(z, w) = \sup\{ u(z):  u < 0, u \in SH(\Omega),\limsup_{\zeta \to w} (u(\zeta) - \log |\zeta - w|) < \infty \}.
$$
A domain admits a Green's function if and only if there exists a non-constant, negative subharmonic function on $\Omega$.  In particular any bounded domain has a Green's function.

\begin{definition}
The Robin constant is defined by
$$
\lambda(z_0) = \lim_{z \to z_0} g(z, z_0) - \ln|z - z_0|.
$$
\end{definition}

In 1972, Suita \cite{S72} made the following conjecture.
\newline
\newline
\noindent{}\textbf{Suita Conjecture. } If $\Omega$ is an open Riemann surface admitting a Green's function, then 
$$
\pi K(z, z) \geq e^{2\lambda(z)}
$$
and if equality holds at one point, then $\Omega$ is biholomorphic to $D(0, 1) \setminus P$, where $P$ is a possibly empty, closed (in the relative topology of $D(0, 1)$) polar set.
\bigskip

The inequality part of the Suita conjecture for planar domains was proved by B\l{}ocki \cite{Bl13}. See also \cite{BL16}. The equality part of the Suita conjecture was proved by Guan and Zhou \cite{GZ15}. See also \cite{D18} for related work.
\medskip

We will use the following result in Helm's book \cite[Theorem 5.6.1]{H14}, which is attributed to Myrberg \cite{M33}.

\begin{theorem} \label{Myrberg's Result}
Let $\Omega$ be a non-empty open subset of $\mathbb{R}^2$.  Then $\mathbb{R}^2 \setminus \Omega$ is not a polar set if and only if $\Omega$ admits a Green's function.
\end{theorem}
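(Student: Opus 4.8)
Since this dichotomy is classical (due to Myrberg; see \cite[Theorem 5.6.1]{H14} and \cite{M33}) it will be quoted rather than reproved below, but a proof can be organized as follows. Write $P:=\mathbb{R}^2\setminus\Omega$, a closed set, and recall from above that ``$\Omega$ admits a Green's function'' means that $\Omega$ carries a non-constant negative subharmonic function; recall also that a closed polar set does not separate the plane (its compact subsets are totally disconnected), so that $\mathbb{C}\setminus P$ is connected whenever $P$ is polar.

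For the implication ``$\Omega$ admits a Green's function $\Rightarrow$ $P$ is not polar'' I would argue by contraposition: assuming $P$ polar, take any non-constant negative subharmonic $u$ on $\Omega=\mathbb{C}\setminus P$. Since $u$ is bounded above by $0$ and $P$ is a closed polar set, the removable-singularity theorem for subharmonic functions extends $u$ to a subharmonic $\tilde u\le 0$ on all of $\mathbb{C}$; by the Liouville theorem for subharmonic functions $\tilde u$, hence $u$, is constant, a contradiction.

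For the converse, ``$P$ not polar $\Rightarrow$ $\Omega$ admits a Green's function,'' I would first dispose of the bounded case (a bounded domain always has a Green's function, as noted above) and then assume $\Omega$ unbounded, indeed a domain (for a disconnected open set one treats its unbounded components separately). Since polar sets form a $\sigma$-ideal and $P=\bigcup_{n\ge 1}\bigl(P\cap\overline{B(0,n)}\bigr)$, some compact set $K:=P\cap\overline{B(0,n)}$ is non-polar, i.e.\ of positive logarithmic capacity; note $K\cap\Omega=\emptyset$. Let $\mu$ be the equilibrium measure of $K$, let $V(K)=-\log\operatorname{cap}(K)\in\mathbb{R}$ be its Robin constant, and consider the equilibrium potential $p(z)=\int\log|z-w|^{-1}\,d\mu(w)$. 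Then $p$ is harmonic on $\mathbb{C}\setminus K\supseteq\Omega$; it satisfies $p\le V(K)$ throughout $\mathbb{C}$ by Frostman's theorem together with the maximum principle for logarithmic potentials; and $p(z)\to-\infty$ as $|z|\to\infty$, since $\mu$ is a compactly supported probability measure. Writing $\Omega'$ for the unbounded component of $\mathbb{C}\setminus K$, so $\Omega\subseteq\Omega'$ (as $\Omega$ is connected and unbounded), the function $p$ is harmonic and non-constant on $\Omega'$ — it tends to $-\infty$ — hence, by real-analyticity of harmonic functions, non-constant on the open subset $\Omega$. Then $u:=p-V(K)$ is the required non-constant negative harmonic function on $\Omega$.

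I expect the converse direction to carry the weight: producing a non-polar \emph{compact} subset of $P$ (the $\sigma$-ideal property of polar sets), the global bound $p\le V(K)$ (Frostman's inequality plus the maximum principle for potentials), and the descent of non-constancy from $\Omega'$ to the smaller open set $\Omega$. Each ingredient is classical, but assembling them is the substance of the argument.
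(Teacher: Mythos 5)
The paper does not actually prove this statement: it quotes it directly from Helms \cite[Theorem 5.6.1]{H14}, attributing it to Myrberg \cite{M33}, so there is no in-paper argument to compare against. Your sketch supplies a genuine, essentially correct potential-theoretic proof. The forward direction (contraposition via the removable-singularity theorem for subharmonic functions bounded above across a closed polar set, followed by Liouville for subharmonic functions on $\mathbb{C}$) is sound, as is the converse (extract a non-polar compact $K\subset P$ using the $\sigma$-ideal property, take the equilibrium potential $p$, use Frostman's bound $p\le V(K)$ on all of $\mathbb{C}$, and observe $p$ is non-constant because $p(z)\to-\infty$ as $|z|\to\infty$). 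Two small points you should make explicit. First, the paper's criterion requires a \emph{strictly} negative non-constant subharmonic function, while your $u=p-V(K)$ is a priori only $\le 0$; this is repaired in one line by the maximum principle, since a non-constant harmonic function on a domain that is $\le 0$ cannot attain the value $0$. Second, your reduction of the disconnected case (``treat unbounded components separately'') is slightly off the mark but the issue is vacuous: if $\Omega$ is disconnected then the complement of each component contains another component, hence an open set, hence is non-polar, so every component admits a Green's function and simultaneously $P$ contains no information to check — and conversely, when $P$ is polar, $\mathbb{C}\setminus P$ is automatically connected, so the substantive case is exactly the connected one you treat. With those two sentences added, your argument stands as a complete, self-contained proof of a result the paper only cites.
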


\begin{proof}[Proof of Theorem \ref{theorem 1}]

First suppose, $v(\Omega) = \infty$.  Since $e^{2\lambda(z_0)} > 0$, the inequality part of the Suita conjecture implies that $\Omega$ does not have a Green's function. By Theorem \ref{Myrberg's Result}, $\Omega = \mathbb{C} \setminus P$ where $P$ is a polar set. $P$ is closed because it is the complement in $\mathbb{C}$ of an open set.
\medskip 

Now suppose $v(\Omega) < \infty$.  Since polar sets have two-dimensional Lebesgue measure 0, by Theorem \ref{Myrberg's Result}, $\Omega$ admits a Green's function. As in the proof of the $C^{\infty}$ boundary case, after a translation, $z_0 = 0$ and $K_{\Omega}(\cdot, 0) \equiv v(\Omega)^{-1}$.  Let $\Omega_{\tau} = \{z \in \Omega: g(z, 0) < \tau \}$. Let 
$$
 r_0 := e^{\tau - \lambda(0) - \epsilon}, \quad r_1 := e^{\tau  - \lambda(0) + \epsilon}.
$$
Then for $\tau < 0$ sufficiently negative,
$$
D(0, r_0) \subset \Omega_{\tau} \subset D(0, r_1)
$$  
(cf. \cite{BL16, B}.) Hence,
\begin{eqnarray*}
{e^{-2\epsilon}e^{2\lambda(0)} \over \pi} \leq {e^{2\tau} \over v(\Omega_{\tau})} \leq {e^{2\epsilon}e^{2\lambda(0)} \over \pi}.
\end{eqnarray*}
Letting $\epsilon \to 0^+$, 
$$
 {e^{2 \tau} \over v(\Omega_{\tau})} \approx {e^{2\lambda(0)} \over \pi}, \quad \hbox{ as } \tau \to -\infty.
$$  
By Theorem 3 of \cite{BZ15}, ${e^{2\tau} \over v(\Omega_{\tau})}$  is a decreasing function on $(-\infty, 0]$; hence,
$$
K(0, 0) = {1 \over v(\Omega)} \leq {e^{2\lambda(0)} \over \pi} \leq K(0, 0).
$$
\par
By the equality part of the Suita conjecture, there exists a biholomorphic map $f: D(0, 1)\setminus P \to \Omega$ where $P$ is a closed polar set.  After a M\"obius transformation of the unit disk, we may assume $0 \not\in P$, $f(0) = 0$ and $f'(0) > 0$. Since $P$ is removable for functions in $A^2(D(0, 1) \setminus P)$, $K_{D(0, 1) \setminus P}(\cdot, \cdot) = K_{D(0, 1)}(\cdot, \cdot)$ when both sides are well-defined.  As in the case where $\Omega$ has $C^{\infty}$-boundary, by the transformation law of the Bergman kernel, $f$ is linear.  Hence $\Omega = D(0, f'(0))\setminus f(P)$ and $f(P)$ is a closed polar set.

\end{proof}
\begin{proof}[Proof of Corollary \ref{Corollary}] Since $1 = \int_{\Omega} K(w, z_0) dv(w)$, it follows that
$C = v(\Omega)^{-1} \hbox { and } K(z_0, z_0) = v(\Omega)^{-1}.
$
The result now follows from Theorem 1.

\end{proof}

\section{Proof of Theorem \ref{theorem 2}}

In this section, we provide a short proof of Theorem \ref{theorem 2}. Historically, biholomorphic mappings between Reinhardt domains have been an active area of research since the earliest days of several complex variables (see \cite{R, T}).

\begin{proof}[Proof of Theorem \ref{theorem 2}] It is easy to see that $\Omega$ is complete Reinhardt with algebraic boundary.  To verify that $\Omega$ is strongly convex, one lets $\rho(z) = |z_1|^4 + |z_1|^2 + |z_2|^2 - 1$ and verifies that the real-Hessian of $\mathcal{H}(\rho(z))$ satisfies
$$
w^{\tau}\mathcal{H}(\rho(z_0))w > 0, \quad z_0 \in \partial \Omega, \quad w \in \mathbb{R}^4\setminus\{0\}.
$$
Suppose towards a contradiction that there exists an $F: \mathbb{B}^2 \to \Omega$ which is biholomorphic.  Since the holomorphic automorphism group of $\mathbb{B}^2$ is transitive, we may suppose that $0 \mapsto 0$.  By Henri Cartan's theorem, \cite[Theorem 2.1.3.]{Rud80}, $F$ is linear; that is $F(z) = (a_1z_1 + a_2z_2, a_3z_1 + a_4z_2)$.  Consequently, $F:b \mathbb{B}^2 \to b\Omega$. After composing with a holomorphic rotation of $\mathbb{B}^2$, we may also suppose $F((0, 1)) = (0, 1)$.  Then, $a_2 = 0, a_4 = 1$.  Since for all $\theta \in [0, 2\pi]$,
$$
b\Omega \ni F({1 \over \sqrt{2}}, {1 \over \sqrt{2}}e^{i\theta}) = ({a_1 \over \sqrt{2}}, {a_3 \over \sqrt{2}} + {e^{i\theta} \over \sqrt{2}}),
$$
we see that
$$
{|a_1|^4 \over 4} + {|a_1|^2 \over 2} + {|a_3|^2 \over 2} + Re\langle a_3, e^{i\theta} \rangle + {1 \over 2} = 1,
$$
which implies that $a_3 = 0$. Thus,
\begin{equation} \label{eqn1}
|a_1|^4 + 2|a_1|^2 = 2. 
\end{equation}
Since $(a_1, 0) = F((1, 0)) \in b\Omega$,
\begin{equation} \label{eqn2}
|a_1|^4 + |a_1|^2 = 1.
\end{equation}
Equations \eqref{eqn1} and \eqref{eqn2} do not have a simultaneous solution.  Thus, $F$ does not exist.
 
\end{proof}

\subsection*{Acknowledgements} The authors sincerely thank L\'aszl\'o Lempert, Song-Ying Li, and the anonymous referee for the valuable comments on this paper.

\fontsize{11}{11}\selectfont
 \bigskip
 
\noindent Department of Mathematics, University of California, Irvine, CA 92697-3875, USA
\bigskip
 
\noindent Emails: \quad 1987xindong@tongji.edu.cn  \quad jtreuer@uci.edu
 
\end{document}